\newcommand*\patchAmsMathEnvironmentForLineno[1]{%
  \expandafter\let\csname old#1\expandafter\endcsname\csname #1\endcsname
  \expandafter\let\csname oldend#1\expandafter\endcsname\csname end#1\endcsname
  \renewenvironment{#1}%
     {\linenomath\csname old#1\endcsname}%
     {\csname oldend#1\endcsname\endlinenomath}}%
\newcommand*\patchBothAmsMathEnvironmentsForLineno[1]{%
  \patchAmsMathEnvironmentForLineno{#1}%
  \patchAmsMathEnvironmentForLineno{#1*}}%
\newtheorem{theorem}{Theorem} 
\newtheorem{theorem*}{Theorem} 
\newtheorem{lemma}[theorem]{Lemma}
\newtheorem{claim}[theorem]{Claim}
\renewcommand{\epsilon}{\varepsilon}
\theoremstyle{definition}
\theoremstyle{remark}
\newcommand{\URL}{\url{http://orion.math.iastate.edu/lidicky/pub/c5k3free/}}
\renewcommand{\P}{\mathbb{P}}
\newcounter{arxiv}
\begin{document}
\setcounter{arxiv}{0}


\title{Pentagons in triangle-free graphs}
\author{
Bernard Lidick\'{y}\thanks{Department of Mathematics, Iowa State University, Ames, IA, E-mail: {\tt lidicky@iastate.edu}. Research of this author is supported in part by NSF grant DMS-1600390.}
\and
Florian Pfender\thanks{Department of Mathematical and Statistical Sciences, University of Colorado Denver, E-mail: {\tt 
Florian.Pfender@ucdenver.edu}. Research is partially supported in part by NSF grant DMS-1600483.} 
}

\maketitle

\begin{abstract}
For all $n\ge 9$, we show that the only triangle-free graphs  on $n$ vertices
maximizing the number $5$-cycles are balanced blow-ups of a 5-cycle.
This completely resolves a conjecture by Erd\H{o}s, and extends results by Grzesik and Hatami, Hladk\'y, Kr\'{a}l', Norin, and Razborov, where they independently showed this same result for large $n$ and for all $n$ divisible by $5$.
\end{abstract}

\section{Introduction}

In 1984, Erd\H{o}s~\cite{Erdos1984} conjectured that for all $n\ge 5$, the balanced blow-up of a $5$-cycle maximizes the number of $5$-cycles in the class of triangle-free graphs on $n$ vertices. {\em Balanced blow-up} means in this context that every vertex of the $5$-cycle is replaced by an independent set of size $\lfloor\frac{n}5\rfloor$ or  $\lceil\frac{n}5\rceil$, and edges are replaced by complete bipartite graphs between the sets.
This conjecture, in a way, supports the Meta-Theorem saying that among all triangle-free graphs, this blow-up of $C_5$ is the "least" bipartite graph.

In 2012, Grzesik~\cite{Grzesik}, and independently in 2013, Hatami, Hladk\'y, Kr\'{a}l', Norin, and Razborov~\cite{HatamiHKNR13} settled this conjecture asymptotically by showing that any $n$-vertex triangle-free graph has at most $\frac{5!}{5^5}{n\choose 5}(1+o(1))=0.0384{n\choose 5}(1+o(1))$ $5$-cycles. Furthermore, the second group of authors showed uniqueness of the extremal graphs for most values of $n$.
\begin{theorem}[Hatami et. al. \cite{HatamiHKNR13}]\label{prev}
Let $n$ be either divisible by $5$, or large enough. The maximum number of copies of a $5$-cycle in triangle-free graphs
on $n$ vertices is
\[
\prod_{i=0}^4\left\lfloor\frac{n+i}{5}\right\rfloor.
\]
Moreover, the only triangle-free graphs  on $n$ vertices
maximizing the number $5$-cycles are balanced blow-ups of a 5-cycle.
\end{theorem}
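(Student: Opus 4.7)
The plan is to combine flag algebras (for the asymptotic bound) with a stability-plus-rigidity argument (to pin down the extremal structure).

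\emph{Asymptotic bound.} I would seek a flag algebra certificate, set in the theory of triangle-free graphs, showing that the limit density of $C_5$ is at most $5!/5^5 = 120/3125$. Concretely, one encodes this as a semidefinite programming problem indexed by flags on a small number of vertices (say at most $7$), finds a sum-of-squares decomposition numerically, and rounds it to an exact rational certificate. The matching lower bound is furnished by the balanced blow-up of $C_5$, which has exactly $\prod_{i=0}^4\lfloor (n+i)/5\rfloor$ copies of $C_5$, asymptotic to $(120/3125)\binom{n}{5}$.

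\emph{Stability.} Next I would promote this to a stability statement: any triangle-free $G$ on $n$ vertices with at least $(1-\delta)(120/3125)\binom{n}{5}$ copies of $C_5$ is $\eps(\delta)$-close in edit distance to a balanced blow-up of $C_5$, with $\eps(\delta)\to 0$ as $\delta\to 0$. Inspecting the SDP certificate, the only tight profile of flag densities is that of the blow-up of $C_5$, so the slack in the certificate converts a small global loss in $C_5$-count into a quantitative bound on how many edges of $G$ must be modified to recover a balanced blow-up. A removal-lemma style argument seals the edit-distance claim.

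\emph{Exact extremality.} Finally, for $n$ divisible by $5$ or $n$ sufficiently large, suppose $G$ is extremal and fix the nearest blow-up with parts $V_0,\ldots,V_4$ guaranteed by stability. I would perform a local rigidity analysis: for $v\in V_i$, the number of $C_5$ through $v$ in the perfect blow-up equals $|V_{i-2}||V_{i-1}||V_{i+1}||V_{i+2}|=\Theta(n^4)$, and one shows that any deviation (a misplaced edge, or a vertex whose neighborhood does not match exactly one expected class) decreases the $C_5$-count through $v$ by $\Omega(n^3)$, while the number of \emph{new} $C_5$s created through $v$ is at most $O(\eps n^3)$, a net loss once $\eps$ is small. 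Hence $G$ is itself a blow-up of $C_5$ with parts $n_0,\ldots,n_4$ of total size $n$, and among such blow-ups $\prod n_i$ is maximized uniquely by the as-balanced-as-possible parts, which gives both the count $\prod_{i=0}^4\lfloor(n+i)/5\rfloor$ and uniqueness.

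The \textbf{main obstacle} is the rigidity step: turning ``$\eps n^2$-close to a blow-up'' into ``exactly a blow-up.'' One must exclude configurations in which a small number of vertices have exotic neighborhoods that could locally fabricate a few extra $C_5$s. The hypothesis that $n$ is large (or divisible by $5$, where balanced parts are forced) is precisely what makes the $\Omega(n^3)$ loss term dominate the $O(\eps n^3)$ gain; extending the result down to every $n\ge 9$, as in the present paper, requires either a sharper rigidity estimate or explicit (possibly computer-assisted) analysis of the remaining small cases.
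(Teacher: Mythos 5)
The statement you are proving is Theorem~\ref{prev}, which this paper does not prove at all: it is quoted verbatim from Hatami, Hladk\'y, Kr\'{a}l', Norin, and Razborov~\cite{HatamiHKNR13} as prior work. So there is no ``paper's own proof'' to compare against line by line. That said, your roadmap (flag-algebra SDP certificate for the asymptotic $C_5$ density bound, extraction of a stability statement from the tightness pattern of the certificate, then a local rigidity/cleanup argument to pass from ``close to a blow-up'' to ``exactly a blow-up'') is precisely the strategy of~\cite{HatamiHKNR13}, and you correctly identify the rigidity step as the delicate part. Your sketch is an accurate high-level reconstruction, though of course it leaves the main technical work unexecuted.

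It is worth noting that the present paper, when it proves its own (stronger) Theorem~\ref{main}, deliberately avoids the general stability-plus-rigidity machinery you describe. Instead it isolates a single concrete structural statement, Lemma~\ref{core}: in any extremal triangle-free $G$ on $n\ge 10$ vertices there is a pentagon $C$ such that \emph{every} other vertex has exactly two (consecutive) neighbors on $C$. Once this pentagon is found, triangle-freeness and the extremality of $G$ force the entire graph to be a blow-up of $C_5$ by an elementary two-paragraph argument, with no $\varepsilon$-tracking or edit-distance bookkeeping. The pentagon itself is produced not from a stability theorem but from a flag-algebra inequality relating $d(C_5^+)$ to $d(C_5)$ (Claims~\ref{lowbound} and~\ref{tightup}), combined with a simple averaging argument over copies of $C_5$ in the graphon and the integrality observation~\eqref{eq:main}. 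The payoff is that the argument works all the way down to $n\ge 10$ (with $n\le 9$ handled by enumeration), whereas your proposed rigidity estimate, like the original~\cite{HatamiHKNR13}, only controls the error terms for $n$ large or for $n$ divisible by $5$. If you want to tighten your proposal, the lesson from this paper is to replace the diffuse ``$\Omega(n^3)$ loss vs.\ $O(\eps n^3)$ gain'' comparison with a targeted inequality that directly certifies the existence of a dominating pentagon.
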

Both papers
\cite{Grzesik} and \cite{HatamiHKNR13}
 use the theory of flag algebras, a theory developed by Razbarov~\cite{Raz07}, which can find inequalities of subgraph densities in graph limits with the help of semi-definite programming.

In 2011, Michael~\cite{Michael11} observed that the M\"obius ladder on $8$ vertices (i.e. the $8$-cycle with all diagonals added) contains the same number of $5$-cycles as the balanced blow-up of $C_5$.
In this note we resolve the remaining cases of the conjecture. We also employ flag algebra techniques to find helpful inequalities, and then we use stability results. Finally, enumeration is used to deal with $n \leq 9$. Note that our analysis to show uniqueness is significantly simpler than the analysis in~\cite{HatamiHKNR13}.
\begin{theorem}\label{main}
For all $n$, the maximum number of copies of a $5$-cycle in any triangle-free graph
on $n$ vertices is
\[
\prod_{i=0}^4\left\lfloor\frac{n+i}{5}\right\rfloor.
\]
Moreover, the only triangle-free graphs  on $n\ge 5$ vertices
maximizing the number $5$-cycles are balanced blow-ups of a $5$-cycle, and the M\"obius ladder $ML_8$ for the special case of $n=8$.
\end{theorem}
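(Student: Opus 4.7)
The plan is to combine a flag algebra inequality, a stability/structural argument, and direct enumeration of the small cases. Since Theorem~\ref{prev} already covers $n$ divisible by $5$ and all sufficiently large $n$, the remaining work is the intermediate regime of $n$ not divisible by $5$, together with identifying $ML_8$ at $n=8$. First, I would run a flag-algebra semidefinite program on flags of suitable size (say, on $5$ or $7$ vertices) to produce an inequality of the form
\[
c_5(G) \;\le\; \frac{5!}{5^5}\binom{n}{5} \;-\; \alpha\sum_{j} q_j(G),
\]
where $c_5(G)$ is the number of $5$-cycles in a triangle-free graph $G$ on $n$ vertices, $\alpha>0$ is an explicit constant, and each $q_j(G)\ge 0$ is a density expression vanishing on blow-ups of $C_5$. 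The bound must be sharp enough that any extremal $G$ forces $q_j(G)=0$ for all $j$; choosing the $q_j$ to penalize specific $5$-vertex configurations forbidden in a $C_5$-blow-up would make the structural consequences most explicit.

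Given such a certificate, the second step is the stability-to-exactness argument. Vanishing of every $q_j(G)$ should imply that every $5$-vertex subset of $G$ embeds into a blow-up of $C_5$, which by a standard cleaning argument partitions $V(G)$ into at most five classes of mutually twin vertices, and triangle-freeness plus extremality pins the quotient down to $C_5$ itself. Convexity of $\prod x_i$ subject to $\sum x_i=n$ then forces a balanced partition and hence the exact formula $\prod_{i=0}^4\lfloor(n+i)/5\rfloor$. This route is cleaner than the one in~\cite{HatamiHKNR13} because the SDP certificate directly encodes the local constraints rather than deriving them indirectly through regularity.

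The main obstacle I anticipate is producing a flag-algebra certificate sharp enough to handle all $n\ge 10$ uniformly rather than only asymptotically, and ensuring that no sporadic triangle-free extremals slip through the stability step. The M\"obius ladder $ML_8$ demonstrates that such exceptional extremals really do exist in principle, so the inequality must be checked to exclude any analog for $n\ge 10$; some care is also needed because the target $\prod_{i=0}^4\lfloor(n+i)/5\rfloor$ depends on $n\bmod 5$, so the certificate has to absorb the rounding slack. Finally, for $n\le 9$ I would enumerate all triangle-free graphs (e.g.\ using \texttt{nauty}) and compute $c_5$ directly, both anchoring the base case of the argument and recovering $ML_8$ as the unique non-blow-up extremal graph at $n=8$; this completes the proof of Theorem~\ref{main}.
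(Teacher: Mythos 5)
The central certificate you propose cannot exist as stated. For every $n\ge 5$ the balanced blow-up of $C_5$ has $\prod_{i=0}^4\left\lfloor\frac{n+i}{5}\right\rfloor$ five-cycles, which \emph{exceeds} $\frac{5!}{5^5}\binom{n}{5}$ (for $n=5k$ compare $k^5$ with $\frac{5k(5k-1)(5k-2)(5k-3)(5k-4)}{5^5}$), while all of your nonnegative terms $q_j$ vanish on it; so your inequality fails on the extremal graph itself. The constant $\frac{5!}{5^5}$ is only the asymptotic density, and the excess of the true maximum over $\frac{5!}{5^5}\binom{n}{5}$ is precisely the lower-order slack that the exact problem is about. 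More fundamentally, flag-algebra SDP certificates are graphon-level (asymptotic) statements; applied to the graphon of an $n$-vertex extremal graph they can only force the ``bad'' densities to be small --- of order $1/n^2$, since that is the size of the gap between the extremal graph's $C_5$-density and $0.0384$ --- never exactly zero. Hence the step ``extremality forces $q_j(G)=0$, so every $5$-vertex subset embeds in a blow-up of $C_5$'' does not go through, and for the moderate range (roughly $10\le n<100$) the error terms swamp any straightforward stability argument; this is exactly the obstacle you flag but do not resolve. (Also, you cannot use Theorem~\ref{prev} to shrink the range, since its ``large $n$'' threshold is not explicit --- though this is moot if you treat all $n\ge 10$ uniformly.)

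The missing device, which is how the paper converts approximate density information into exact structure, is a conditional-probability-plus-integrality argument. One proves two graphon-valid inequalities bounding $d(C_5^+)$ from below in terms of $d(C_5)$ (one tuned for a threshold $d(C_5)\ge 0.034$, used for $10\le n<100$; one tight at $0.0384$, used for $n\ge 100$ together with $d(C_5)\le 0.0384$), applies them to the graphon of the extremal $G$, and deduces that the conditional probability that a random vertex extends a uniformly random induced $C_5$ to a $C_5^+$ exceeds $1-\frac1n$. Since this quantity is built from a graph on $n$ vertices, integrality then yields one specific $5$-cycle $C$ such that \emph{every} other vertex has exactly two neighbors on $C$ in the $C_5^+$ pattern. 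From this single exact structural fact an elementary argument finishes: the classes $X_1,\dots,X_5$ of vertices attached like $v_1,\dots,v_5$ partition $V(G)$, triangle-freeness kills $X_i$--$X_{i+2}$ edges, and any missing $X_i$--$X_{i+1}$ edge could be added to create new $5$-cycles without triangles, contradicting maximality; then maximizing $x_1x_2x_3x_4x_5$ forces balance and the formula. Your treatment of $n\le 9$ by exhaustive enumeration (recovering $ML_8$ at $n=8$) coincides with the paper; it is the exactness mechanism for $n\ge 10$ that your proposal lacks.
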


For the ease of presentation, we will use a simplified language of graph limits. No understanding past the following definitions is required to follow this note. A {\em graphon $B$ of a graph $G$} with vertex set $V(G)=\{v_1,v_2,\ldots,v_n\}$ is a symmetric function  $b:[0,1]^2 \to \{0,1\}$ with
\begin{align*}
b(x,y)=&\begin{cases}
1,& \mbox{ if }\frac{i-1}{n}\le x<\frac{i}{n},\frac{j-1}{n}\le y<\frac{j}{n},v_iv_j\in E(G),\\
0,& \mbox{ if }\frac{i-1}{n}\le x<\frac{i}{n},\frac{j-1}{n}\le y<\frac{j}{n},v_iv_j\notin E(G).
\end{cases}
\end{align*}
For a set $X=\{x_1,x_2,\ldots,x_k\}$ of real numbers in $[0,1]$, $B(X)$ is the graph with vertex set $X$ and edge set
$\{\{x_i,x_j\}:B(x_i,x_j)=1\}$.
For a graph $H$, the {\em induced density} of $H$ in $B$ is
\[
d(H)=d_B(H)=\P(B(X)\cong H),
\]
where $X$ is chosen uniformly at random from $[0,1]^{|H|}$. Note that this quantity equals the limit of the densities of $H$ in balanced blow-ups of $G$ on $N$ vertices, where $N\to\infty$. This notion of a graphon agrees with the theory of dense graph limits developed by Lov\'asz and Szegedy (\cite{LS06}, see the book by Lov\'asz~\cite{Lovasz} for a thorough introduction to the theory). We use a different parametrization to follow the theory of flag algebras by Razborov.

In the following, we assume some familiarity with flag algebras,
and we provide only a very brief reminder of the definitions.
For a proper introduction to flag algebras, see~\cite{Raz07}.

We will need the notion of a flag. A {\em flag} is a graph $F$ on $n$ vertices, in which some vertices, say the first $k$ vertices $X=\{x_1,\ldots,x_k\}\subseteq V(F)$, are labeled with distinct labels. The {\em type} $\sigma$ of $F$ is the labeled graph induced on $X$. Two flags are isomorphic if there exists an isomorphism of the underlying graphs which induces an isomorphism of the labeled vertices. The graphon $B$ of the flag $F$ is the graphon of the unlabeled graph underlying $F$, together with the set $Y=\{\frac{0}{n},\frac{1}{n},\ldots,\frac{k-1}{n}\}\in [0,1]^{|X|}$. Note that $B(Y)=\sigma$. For any flag $H$ of type $\sigma$, we then have
\[
d(H)=d_B(H)=\P(B(X\cup Y)\cong H),
\]
where $X$ is chosen uniformly at random from $[0,1]^{|H|-|Y|}$. 

A {\em Flag Algebra} $\mathcal{F}^\sigma$ consists of formal linear combinations of flags of the type $\sigma$, with the canonical definitions for addition and scalar multiplication. A multiplication of flags is also defined in a way that it naturally corresponds to the multiplication of flag densities in graphons of flags.

\section{Proof of Theorem~\ref{main}}

The core of the proof of Theorem~\ref{main} is the following lemma.

\begin{lemma}\label{core}
If $G$ is a triangle-free graph maximizing the number of 5-cycles on $n \geq 10$ vertices,
then there exists a 5-cycle $C$ in $G$ such that every other vertex of $G$ has exactly two neighbors in $C$.
\end{lemma}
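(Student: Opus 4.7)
The plan is to derive Lemma~\ref{core} from a flag-algebra inequality followed by a short averaging step. A trivial but crucial observation is that triangle-freeness forces $|N(v)\cap V(C)|\le 2$ for every $5$-cycle $C$ and every $v\in V(G)\setminus V(C)$: three neighbors would contain a consecutive pair on $C$, giving a triangle. So a $5$-cycle violating the lemma's conclusion must have some outside vertex with exactly $0$ or exactly $1$ neighbor on $C$. Write $c_5(G)$ for the number of $5$-cycles of $G$ and $N_j(G)$ for the number of pairs $(C,v)$ with $C$ a $5$-cycle, $v\notin V(C)$, and $|N(v)\cap V(C)|=j$.

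The target inequality is
\[
c_5(G)\;+\;\alpha\bigl(N_0(G)+N_1(G)\bigr)\;\le\;\prod_{i=0}^{4}\left\lfloor\tfrac{n+i}{5}\right\rfloor
\]
for some explicit $\alpha>0$ and every triangle-free $G$ on $n\ge 10$ vertices. Given this, the lemma is immediate: the balanced blow-up of $C_5$ on $n$ vertices already realizes $c_5=\prod_{i=0}^{4}\lfloor(n+i)/5\rfloor$, so extremality of $G$ forces $N_0(G)+N_1(G)=0$, i.e.\ \emph{every} $5$-cycle of $G$ satisfies the conclusion. If the inequality were only obtained in the weaker form $N_0(G)+N_1(G)<c_5(G)$, averaging still delivers a single good $C$ because any bad $5$-cycle contributes at least one pair to $N_0+N_1$.

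To produce the certificate I would run the Razborov SDP on flags of size up to about $7$, in essentially the setup used in \cite{Grzesik,HatamiHKNR13} to bound $d(C_5)$, but now with unknown positive coefficients $\alpha_0,\alpha_1$ attached to the two target flags ``labeled $C_5$ plus one extra vertex having $0$ (resp.\ $1$) edges into the cycle''. A numerical solution followed by rational rounding should yield an explicit $\alpha>0$, using the fact that the blow-up of $C_5$ is the unique sharp example at the density level and that the two bad flags vanish on it.

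The main obstacle is upgrading the resulting density-level inequality to an inequality on \emph{integer} counts valid for all $n\ge 10$, not merely for large $n$ or for $n\equiv 0\pmod 5$ (the ranges already handled by Theorem~\ref{prev}). This requires carefully absorbing the lower-order error terms from the conversion between densities and subgraph counts, with the tightest case likely being the smallest admissible values of $n$; any residual small cases that the certificate cannot close can be mopped up by the direct enumeration that the introduction already promises for $n\le 9$ and that can be pushed a few steps further if needed.
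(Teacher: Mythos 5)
Your two elementary observations are fine (triangle-freeness caps $|N(v)\cap V(C)|$ at $2$, and any bad pentagon contributes at least one pair to $N_0+N_1$), and your fallback statement --- $N_0(G)+N_1(G)<c_5(G)$ for the extremal $G$ --- is indeed the right thing to prove; it is essentially what the paper establishes, phrased there in graphon language: for the extremal graph, conditioned on five random points spanning a $C_5$, the probability that a sixth point fails to attach with exactly two neighbours is less than $\tfrac1n$, so by averaging some pentagon has no bad vertices at all. The genuine gap is in your headline inequality: no constant $\alpha>0$ can work. Take the balanced blow-up of $C_5$ minus a single edge $uv$ with $u\in X_1$, $v\in X_2$. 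It loses only the roughly $(n/5)^3$ pentagons through the edge $uv$, while every transversal pentagon through $v$ avoiding $u$ now sees $u$ with exactly one neighbour (and symmetrically for $v$), so $N_0+N_1\gtrsim 2(n/5)^4$. Hence your inequality forces $\alpha\le \tfrac{5}{2n}(1+o(1))$, i.e.\ $\alpha$ must decay with $n$; and with such an $\alpha$ the inequality must hold with \emph{equality} on the balanced blow-up, leaving zero room for the density-to-count conversion errors you propose to ``absorb''.

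The second, quantitative gap is the real crux even for the fallback version. After passing to densities, extremality only gives $d(C_5)\ge d_n$, where $d_n$, the pentagon density of the balanced blow-up on $n$ vertices, sits below the limiting value $0.0384$ by $\Theta(1/n^2)$ (e.g.\ $d_{12}\approx 0.0347$), while the bound you must beat is a bad-extension probability of about $\tfrac1n$. A single SDP certificate that is tight at $d(C_5)=0.0384$ yields ``bad probability $\le c\,(0.0384-d(C_5))$'' for some explicit constant $c$, and with $d(C_5)\ge d_n$ this beats $\tfrac1n$ only for large $n$ (with the paper's constants, $n\ge 100$; it already fails at $n=12$). This is exactly why the paper needs \emph{two} different flag-algebra inequalities: a tight one (Claim~\ref{tightup}) for large $n$, and a second, non-tight one valid under the relaxed hypothesis $d(C_5)\ge 0.034$ (Claim~\ref{lowbound}) to handle $10\le n\le 99$. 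Your plan to mop up whatever the certificate misses by ``pushing enumeration a few steps further'' cannot bridge this range: the leftover cases extend to $n$ on the order of $100$, far beyond any exhaustive enumeration of triangle-free graphs (already at $n=10$ there are over ten thousand, and the count explodes from there). So the overall strategy --- averaging over pentagons plus a flag-algebra stability inequality applied to the extremal graph --- matches the paper, but as written your certificate either cannot exist (constant $\alpha$) or, in the fallback form, is not sufficient on its own for all $n\ge 10$ without the second inequality for moderate $n$.
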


Before we prove the lemma, we first show how it implies Theorem~\ref{main}. 

\begin{proof}[Proof of Theorem~\ref{main}]
Let $G$ be a triangle-free graph maximizing the number of $C_5$s on $n$ vertices.
For $n \leq 9$, we use a computer to enumerate all $2480$ triangle-free graphs on 
$n$ vertices and check that the theorem is true.
Hence we assume $n \geq 10$ and we can use Lemma~\ref{core}.

Let $C=v_1v_2v_3v_4v_5v_1$ 
be the 5-cycle  from Lemma~\ref{core}. 
For $1\le i\le 5$, let $X_i$ be the set of vertices which have the same neighbors in $C$ as $v_i$.
Since $G$ is triangle free, these five sets are independent sets and partition $V(G)$.
In the following, all indices are calculated modulo $5$.
Vertices in $X_i$ are not adjacent to vertices in $X_{i+2}$ since $v_{i+1}$ is their common neighbor, 
for all $1 \leq i \leq 5$. 
Therefore, for any $u_i\in X_i$ and $u_{i+1}\in X_{i+1}$, $N(u_i)\cap N(u_{i+1})=\emptyset$.

If $u_i\in X_i$ and $u_{i+1}\in X_{i+1}$, then $u_iu_{i+1}\in E(G)$. Otherwise, we could add $u_iu_{i+1}$, creating a new $C_5$ in $u_iu_{i+1}v_{i+2}v_{i+3}v_{i+4}u_i$ without creating a triangle, contradicting the maximality of $C_5$.
Hence, $X_i\cup X_{i+1}$ induces a complete bipartite graph, and $G$ is a blow-up of $C_5$.

Denote $|X_i|$ by $x_i$ for $1 \leq i \leq 5$, so $n=x_1+x_2+x_3+x_4+x_5$. Then the number of $C_5$ in $G$ is $x_1x_2x_3x_4x_5$. 
This number is maximized if and only if $|x_i-x_j|\le 1$ for all $0\le i\le j\le 5$. Therefore, $G$ is a balanced blow-up of $C_5$, proving Theorem~\ref{main}.
\end{proof}

Denote by $C_5^+$ the balanced blow-up of $C_5$ on $6$ vertices, i.e., the
graph obtained from $C_5$ by duplicating one vertex.
The proof of Lemma~\ref{core} uses the following results that were obtained using flag algebras.
For any triangle-free graphon $B$ of a graph:
\begin{claim}[\cite{Grzesik,HatamiHKNR13}]\label{cl:C5}
$d(C_5) \leq 0.0384$.
\end{claim}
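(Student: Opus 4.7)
The plan is to apply Razborov's flag algebra method, following the approach in \cite{Grzesik,HatamiHKNR13}. The target bound $0.0384 = 5!/5^5$ is sharp, attained in the limit by the balanced blow-up of $C_5$, so any proof must be tight: the certificate must vanish on the extremal graphon $B^\star$ corresponding to the balanced $C_5$ blow-up.

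Concretely, I would search for a certificate of the form
\[
\tfrac{5!}{5^5} - d(C_5) \;=\; \sum_{i=1}^{t} \bigl\llbracket \mathbf{v}_i^{\top} Q_i \mathbf{v}_i \bigr\rrbracket_{\sigma_i} \;+\; (\text{slack from triangle-freeness}),
\]
where each $\sigma_i$ is a small type (singletons, edges, non-edges, paths $P_3$, cherries), $\mathbf{v}_i$ is a column vector of flags of type $\sigma_i$, each $Q_i$ is positive semidefinite, and $\llbracket\cdot\rrbracket_{\sigma_i}$ is the unlabelling/averaging operator. Each summand on the right is nonnegative on any triangle-free graphon, so an identity of this shape immediately yields the claim. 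The computational pipeline is standard: enumerate all triangle-free graphs on up to $\ell$ vertices (say $\ell=7$); enumerate flags of each chosen type on $\lfloor(\ell+|\sigma_i|)/2\rfloor$ vertices; compute the multiplication tensors inside each $\mathcal{F}^{\sigma_i}$ modulo triangle-freeness; assemble the resulting semidefinite program whose objective minimises the left-hand residual; and solve it numerically with a solver such as CSDP or SDPA.

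The main obstacle is the last step: converting a floating-point SDP solution into an exact rational certificate that still matches the sharp bound $120/3125$. Because equality is attained by $B^\star$, every extremal flag vector $\mathbf{v}_i(B^\star)$ must lie in $\ker Q_i$, so the numerical $Q_i$ produced by the solver will be nearly singular in predictable directions. The standard remedy is to read off the expected kernel analytically from $B^\star$, project the numerical $Q_i$ onto the orthogonal complement of that kernel, round the projected matrix to rationals, and finally verify positive semidefiniteness exactly via a rational Cholesky factorisation. This rounding step is delicate but routine in flag-algebra work; beyond it, the proof is essentially mechanical. A bonus of setting the problem up carefully is that the flag vectors forced into $\ker Q_i$ at equality will be useful later for the stability arguments in the proof of Lemma~\ref{core}.
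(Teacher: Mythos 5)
Your proposal is correct and matches the paper's treatment: the paper does not reprove this claim but cites the flag-algebra computations of Grzesik and of Hatami, Hladk\'y, Kr\'{a}l', Norin, and Razborov, noting they can be replicated (with certificates) in Flagmatic, and your outline --- a sum-of-squares certificate over triangle-free flags, an SDP search, and exact rational rounding guided by the kernel forced by the extremal $C_5$ blow-up graphon --- is precisely how those cited proofs work. The only caveat is that, like the paper, you are deferring to a computer-generated certificate rather than exhibiting one, so the argument is complete only once that certificate is actually produced and verified.
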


\begin{claim}\label{lowbound}
If $d(C_5) \geq 0.034$, then 
$d(C_5^+) \geq 4.57771 \cdot (d(C_5)-0.034) +0.095058$.
\end{claim}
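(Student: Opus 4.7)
The plan is to establish this inequality by the flag algebra method via semidefinite programming, exactly as for Claim~\ref{cl:C5}. Concretely, I would aim to produce an explicit identity of the form
\[
d(C_5^+) - 4.57771 \cdot d(C_5) + 0.060584 \;=\; \sum_{\sigma} \llbracket q_\sigma \rrbracket_\sigma \;+\; \mu\bigl(d(C_5) - 0.034\bigr),
\]
valid for every triangle-free graphon $B$, where each $q_\sigma$ is a positive semidefinite quadratic form in the flags of type $\sigma$, $\mu \ge 0$ is a scalar Lagrange multiplier, and $0.060584 = 4.57771 \cdot 0.034 - 0.095058$. Under the hypothesis $d(C_5) \ge 0.034$ the right-hand side is nonnegative, which is precisely the claimed inequality. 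The multiplier $\mu$ is what allows the bound to be conditional: without the constraint $d(C_5) \ge 0.034$, the extremal $(d(C_5),d(C_5^+))$ curve is not globally dominated by this linear function, so the unconstrained SDP would not return the desired slope.

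To set this up I would fix a truncation size $N$ (typically $N = 7$), enumerate all triangle-free graphs on $N$ vertices, and for each small type $\sigma$ of size $0, 2, 3, \ldots$ list the labeled flags of type $\sigma$ on $k = (N+|\sigma|)/2$ vertices. Every product $F_i^\sigma \cdot F_j^\sigma$ and every density $d(C_5)$, $d(C_5^+)$, $d(C_5) - 0.034$ then expands as an explicit rational combination of the $N$-vertex triangle-free flag densities. The existence of PSD matrices $M_\sigma = (m_{ij}^\sigma)$ and a scalar $\mu \ge 0$ becomes a semidefinite feasibility problem: match the coefficient of each $N$-vertex flag on both sides of the identity. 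Tightness at the balanced $C_5$ blow-up (where $d(C_5) = 5!/5^5 = 0.0384$ and $d(C_5^+) = 0.1152$) pins down the extremal slope $4.57771$ and guides the dual SDP toward the correct solution; the threshold $0.034$ (rather than $0.0384$) is presumably chosen as the smallest clean value below the optimum for which the SDP still certifies the linear bound with this slope.

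The principal obstacle, as with all flag-algebra proofs of this shape, is producing a \emph{rigorous} certificate rather than a purely numerical one. The solver returns floating-point matrices that are PSD and satisfy the identity only up to numerical error, and these must be rounded to exact rational (or algebraic) values while preserving both positive semidefiniteness and the linear identity. A standard strategy is to identify the flags whose coefficients must vanish by tightness at the extremal graphon, fix the corresponding eigenvectors of each $M_\sigma$ exactly, and round the remaining entries by a small perturbation inside the null directions before re-checking positive semidefiniteness in exact arithmetic. The final verification is a finite but sizable rational computation; the certificate matrices would be posted at \URL in the same format used for Claim~\ref{cl:C5}, and the proof in the paper should reduce to citing that certificate and the routine check that the claimed identity holds.
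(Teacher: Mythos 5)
Your proposal follows essentially the same route as the paper: a flag-algebra semidefinite-programming certificate in which the constraint $d(C_5)\geq 0.034$ enters via a nonnegative multiplier (the paper's term $y\cdot(d(C_5)-LB)$ with $LB=0.034$), summed with squares of labeled flag combinations and rounded to an exact computer-verified certificate. The only slight mismatch is your emphasis on tightness at the balanced blow-up forcing delicate rounding; the paper notes this bound is deliberately \emph{not} tight, which is what makes the rounding for this claim easy (the delicate rounding is needed for Claim~\ref{tightup}).
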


\begin{claim}\label{tightup}
$d(C_5^+) \geq 6 \cdot (d(C_5)-0.0384) + 0.1152$.
\end{claim}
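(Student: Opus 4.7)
The inequality to establish rearranges to
\[
d(C_5^+) - 6 \cdot d(C_5) + 0.1152 \geq 0
\]
for every triangle-free graphon. One checks directly that equality holds for the graphon of the balanced blow-up of $C_5$: there $d(C_5) = 5!/5^5 = 0.0384$ and, by counting the $6$-tuples in which exactly one of the five parts is used twice, $d(C_5^+) = 5 \cdot \binom{6}{2} \cdot 4!/5^6 = 0.1152$. So Claim~\ref{tightup} is a \emph{tight} inequality at the conjectured extremal graphon, structurally analogous to Claim~\ref{cl:C5} and Claim~\ref{lowbound}, and I expect it to yield to the same methodology.

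My plan is to use Razborov's flag algebra method combined with semidefinite programming. First I would fix a computation level $N$ (a natural first attempt is $N=7$), enumerate the triangle-free graphs $H_1,\dots,H_t$ on $N$ vertices, and for each admissible type $\sigma$ on $k$ vertices enumerate the flags of type $\sigma$ on $(N+k)/2$ vertices. Standard chain rules express $d(C_5)$ and $d(C_5^+)$ as rational linear combinations of the $d(H_i)$. Next I would pose an SDP that seeks positive semidefinite matrices $M_\sigma$ such that, after applying the unlabeling/averaging operator $[\![\cdot]\!]_\sigma$ to the quadratic forms $\mathbf{f}_\sigma^{T} M_\sigma \mathbf{f}_\sigma$ and re-expanding everything on $N$ vertices, the resulting combination of $d(H_i)$ is dominated term by term by the coefficients of $d(C_5^+) - 6 \cdot d(C_5) + 0.1152$ in the same basis. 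Any feasible solution of this SDP certifies the inequality.

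The hard part will be converting a floating-point SDP output into a rigorous certificate. Because the bound is tight at the $C_5$-blow-up, the SDP optimum is attained with matrices $M_\sigma$ that must be singular, with kernels spanned by the flag density vectors of $\sigma$ evaluated at the extremal graphon. A naive rational rounding of the numerical matrices would typically destroy positive semidefiniteness. The remedy, standard in this area and already needed for Claim~\ref{lowbound} and for the uniqueness statement of Theorem~\ref{prev} in~\cite{HatamiHKNR13}, is to compute the extremal kernel vectors exactly, round each $M_\sigma$ only on the orthogonal complement of its prescribed kernel, and then verify positive semidefiniteness of the rounded matrices by exact Cholesky. If level $N=7$ turns out to be too coarse (for instance, if the SDP is infeasible numerically), I would increase $N$ to $8$ or add further type-$\sigma$ blocks. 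The resulting certificate matrices and a script verifying the identity and the PSD property would be posted at \URL.
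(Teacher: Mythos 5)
Your proposal takes essentially the same route as the paper: a flag-algebra semidefinite program certifying $d(C_5^+) - 6\,d(C_5) + 0.1152 \geq 0$, with the rounding difficulty caused by tightness at the balanced $C_5$ blow-up (where your check $d(C_5^+)=1800/5^6=0.1152$ is correct) handled by rounding on the orthogonal complement of the known extremal kernel, and with certificates made available at \URL. The paper's only cosmetic difference is that it parametrizes the slope as an SDP variable $y$ multiplying $d(C_5)-LB$ and optimizes it together with the $M_\sigma$ and $\alpha_F$ rather than fixing $6$ in advance, but once optimized this yields the same certificate.
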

We defer the proofs of Claims~\ref{lowbound} and~\ref{tightup} to the next section. With these claims, we are ready to prove the core lemma.

\begin{proof}[Proof of Lemma~\ref{core}]
Let $G$ be a triangle-free graph on $n \geq 10$ vertices maximizing the number of $C_5$s, and let $B$ be its graphon.
We want to show that there exists a $5$-cycle $C=v_1v_2v_3v_4v_5v_1$ in $G$ such that $V(G)=X_1\cup\cdots \cup X_5$,
where  $v \in X_i$ if $v$ has the same neighbors in $C$ as $v_i$ for $1 \leq i \leq 5$.
In $B$, this translates to
\[
\exists x_1,x_2,x_3,x_4,x_5\in [0,1]: 
\P (B[\{ x,x_1,x_2,x_3,x_4,x_5\}]\cong C_5^+)=1
\]
for $x\in [0,1]$ picked uniformly at random.

In fact, by the integrality of the number of vertices of $G$, it is sufficient to show that
\begin{equation}\label{eq:main}
\exists x_1,x_2,x_3,x_4,x_5\in [0,1]: 
\P (B[\{ x,x_1,x_2,x_3,x_4,x_5\}]\cong C_5^+)>1-\frac1n.
\end{equation}
We may condition on $B[x_1,x_2,x_3,x_4,x_5]\cong C_5$, and then, picking $x,x_1,x_2,x_3,x_4,x_5\in [0,1]$ independently uniformly at random,
\begin{align*}
\P (B[\{x,&x_1,x_2,x_3,x_4,x_5\}]  \cong C_5^+  \; | \; B[\{x_1,x_2,x_3,x_4,x_5\}]\cong C_5)\\
&=\frac{\P (B[\{x,x_1,x_2,x_3,x_4,x_5\}]\cong C_5^+\land B[\{x_1,x_2,x_3,x_4,x_5\}]\cong C_5)}{\P (B[\{x_1,x_2,x_3,x_4,x_5\}]\cong C_5)}\\
&=\frac{\frac{2}{6}d(C_5^+)}{d(C_5)}.
\end{align*}

Let $G_n$ be a balanced blow up of $C_5$ on $n$ vertices, and let $B_n$ be its graphon. Let $d_n=d_{B_n}(C_5)\le d(C_5)$.
For $10\le n<100$, we use Claim~\ref{lowbound}, and get
\[
\frac{d(C_5^+)}{3d(C_5)}\ge \frac{4.57771 \cdot (d(C_5)-0.034) +0.095058}{3d(C_5)}\ge \frac{4.57771 \cdot (d_n-0.034) +0.095058}{3d_n}.
\]
We compute $d_n$ explicitly for each $n$, and conclude that $\frac{d(C_5^+)}{3d(C_5)}>1-\frac1n$, implying~\eqref{eq:main}.

For $n\ge 100$, we use Claim~\ref{tightup} and Claim~\ref{cl:C5}, and get
\begin{align}
\frac{d(C_5^+)}{3d(C_5)}
&\ge \frac{6 \cdot (d(C_5)-0.0384) + 0.1152}{3d(C_5)}\nonumber\\
&\ge \frac{6 \cdot (d_n-0.0384) + 0.1152}{0.1152}.\label{eq:2}
\end{align}
For $n\equiv i\mod 5$, we have
\[
d_n=\frac{5!\left(\frac{n+5-i}{5}\right)^i\left(\frac{n-i}{5}\right)^{5-i}}{n^5}>0.0384\left(1-\frac{50}{n^2}  \right),
\]
where the last inequality can easily be checked for the five cases.
Therefore, using~\eqref{eq:2},
\[
\frac{d(C_5^+)}{3d(C_5)}> \frac{-6 \cdot 0.0384\frac{50}{n^2} + 0.1152}{0.1152}=1-\frac{100}{n^2}\ge 1-\frac1n,
\]
showing~\eqref{eq:main} and thus completing the proof.
\end{proof}

\section{Flag algebra calculations}
In this section we prove Claims~\ref{lowbound} and \ref{tightup}.
Claim~\ref{cl:C5} was obtained in~\cite{Grzesik} and~\cite{HatamiHKNR13} by flag algebra computations which can easily be replicated 
 by Flagmatic~\cite{flagmatic}, including certificates. 

Both Claims~\ref{lowbound} and~\ref{tightup} are based on similar flag algebra calculations, but they exceed the current limitations of Flagmatic.
We describe an outline here. 
The actual calculations are computer assisted. 
The source code of our software to recreate the calculations is available on arXiv and
at \URL, including all data files.

The following equations are valid for all graphons. 
\begin{align}
d(C_5^+)  &= \sum_{F \in \mathcal{F}_\ell} c_F d(F), \label{eqA}\\
0          &\geq    \sum_{F \in \mathcal{F}_\ell} -\alpha_F d(F), \label{eqB}\\
0          &\geq  - \sum_{\sigma} \llbracket x^T_\sigma M_\sigma  x_\sigma \rrbracket_{\sigma},\label{eqC}
\end{align}
where $\ell$ is an integer at least $6$, $\mathcal{F}_\ell$ is the set of all triangle-free graphs on $\ell$ vertices, $c_F$ are suitably picked rational non-negative coefficients, $\alpha_F$ are any non-negative real numbers, $x_\sigma$ are vectors of some flag densities of type $\sigma$, $M_\sigma$ are any positive semidefinite matrices, and $\llbracket . \rrbracket$ is the unlabeling operator.

In addition, if the graphon satisfies $d(C_5) \geq LB$ for some fixed number $LB$, then for
any non-negative real number $y$ holds
\begin{align}
0          &\geq  - y \cdot (d(C_5) - LB).\label{eqD}
\end{align}

Now we sum all the equations \eqref{eqA} to \eqref{eqD} and we get
\begin{align}
d(C_5^+) &\geq \left[\sum_{F \in \mathcal{F}_\ell} (c_F-\alpha_F) d(F)-\sum_{\sigma} \llbracket x^T_\sigma M_\sigma  x_\sigma \rrbracket_{\sigma}
-y \cdot d(C_5) \right]  + y \cdot LB,
\label{eqF}
\end{align}
The maximum of the right-hand side can be obtained by solving a semidefinite program in variables $\alpha_F$, $M_\sigma$, and $y$.
Notice that for any assignment of the variables,  \eqref{eqF} is valid for any graphon satisfying $d(C_5) \geq LB$. 
So for a fixed assignment of variables, we can change $LB$ and obtain different lower bounds on $d(C_5^+)$,
albeit not necessarily optimal bounds. 

We optimize \eqref{eqF}  for $LB \in \{0.034,0.0384\}$ and obtain Claims~\ref{lowbound} and \ref{tightup} 
by rounding the solution to the corresponding semidefinite program.
The rounding for Claim~\ref{lowbound} is easy as the bound is not tight.
The bound in Claim~\ref{tightup} is tight for $d(C_5)=0.0384$, and the rounding of the semidefinite program required a bit more care.

\bibliographystyle{abbrv}
\bibliography{references.bib}

\end{document}